\documentclass[12pt]{article}

\usepackage{hyperref}
\usepackage{xcolor}
\usepackage{amssymb}
\usepackage{amsfonts}
\usepackage{amsthm}
\usepackage{amsmath}
\usepackage{float}
\usepackage{bigints}
\usepackage{fullpage}
\usepackage{mathtools}
\usepackage[utf8]{inputenc} 
\usepackage{cancel}
\usepackage{verbatim}

\newtheorem{theorem}{Theorem}[section]

\newtheorem{corollary}[theorem]{Corollary}

\newtheorem{definition}[theorem]{Definition}

\newtheorem{lemma}[theorem]{Lemma}

\newtheorem{problem}[theorem]{Problem}

\newtheorem{remark}[theorem]{Remark}

\renewcommand{\epsilon}{\varepsilon}

\newcommand{\B}{\mathbf B}
\newcommand{\cc}{\mathbf c}
\newcommand{\x}{\mathbf x}
\newcommand{\y}{\mathbf y}
\newcommand{\p}{\mathbf p}
\newcommand{\q}{\mathbf q}

\usepackage[pdftex]{graphicx}
\graphicspath{{C:/Users/Sam/Desktop/PaperFigures/}}
\DeclareGraphicsExtensions{.pdf}

\title{On basic $r$-ball polyhedra
\footnote{Keywords and phrases: $r$-ball polyhedron, farthest point Voronoi diagram, farthest point Delaunay complex, $r$-convex hull, r-convex position, basic $r$-ball polyhedron, face lattice, inner dihedral angle, global rigidity. \newline \hspace*{.35cm} 2010 Mathematics Subject Classification: (Primary) 05C10, 52C15, (Secondary) 05B40, 46B20.}}

\author{K\'{a}roly Bezdek\thanks{Partially supported by a Natural Sciences and 
Engineering Research Council of Canada Discovery Grant.}
}

\begin{document}

\maketitle

\begin{abstract}

This note introduces the class of basic $r$-ball polyhedra in the $d$-dimensional Euclidean space $\mathbb{E}^{d}$ for $d>1$ and $r>0$. We investigate their face structure and, for given integers $0\leq i\leq d-1$, $n\geq d+1\geq 3$ determine the maximal number of 
$i$-dimensional faces among all basic $r$-ball polyhedra in $\mathbb{E}^{d}$ with $n$ facets. In addition, we establish that for $d>2$, every basic $r$-ball polyhedron is globally rigid with respect to its inner dihedral angles.
\end{abstract}

\section{Basic $r$-ball polyhedra and their face lattices}\label{sec:intro}

Let $\mathbb{E}^{d}$, $d>1$ denote the $d$-dimensional Euclidean space, with the standard inner product $\langle\cdot ,\cdot\rangle$ and norm $\|\cdot\|$. Let $r>0$ be given. The closed (resp., open) Euclidean ball of radius $r$ centered at $\p\in\mathbb{E}^{d}$ is denoted by $\B^d[\p,r]:=\{\q\in\mathbb{E}^{d}\ |\  \|\p-\q\|\leq r\}$ (resp., $\B^d(\p,r):=\{\q\in\mathbb{E}^{d}\ |\  \|\p-\q\|< r\}$). As usual, ${\rm int}(\cdot)$, ${\rm bd}(\cdot)$, ${\rm cl}(\cdot)$, ${\rm conv}(\cdot)$, and ${\rm dim}(\cdot)$ refer to the interior, boundary, closure, convex hull, and dimension of the corresponding set in $\mathbb{E}^{d}$. Recall that an {\it $r$-ball polyhedron} (see for example, \cite{blnp}, \cite{KMP}, and \cite{MMO}) of $\mathbb{E}^{d}$ is an intersection with non-empty interior of finitely many closed balls of radius $r>0$ in $\mathbb{E}^{d} $, which are called the {\it generating balls} of the $r$-ball polyhedron.  Also, it is natural to assume that removing any of the generating balls yields the intersection of the remaining balls becoming a larger set. In other words, whenever we take an $r$-ball polyhedron we always assume that its generating balls form a {\it reduced family}. A very natural subfamily of $3$-dimensional $r$-ball polyhedra is formed by the so-called normal $r$-ball polyhedra, which have been introduced in \cite{Be2013a} (see also \cite{Be2013b}). In this note, we extend that definition to higher dimensions and introduce {\it basic $r$-ball polyhedra} in $\mathbb{E}^{d}$ for all $d>1$ and discuss some combinatorial and metric properties of them which are based on the underlying farthest point Voronoi diagrams. The details are as follows.

Let us start by recalling the notion of {\it farthest point Voronoi diagram} following \cite{Se91}. Let $C:=\{\cc_1,\dots ,\cc_n\}$ be a point set of $n>1$ (pairwise distinct) points in $\mathbb{E}^{d}$, $d>1$. We define the function $R_f$ that maps $\mathbb{E}^{d}$ into the powerset of $C$ by $$R_f(\x):=\{\cc_i\in C\ |\ \|\x-\cc_i\|\geq \|\x-\cc_j\|\ {\rm for\ all}\ 1\leq j\leq n\}.$$ The function $R_f$ defines an equivalence relation $\rho_f$ on $\mathbb{E}^{d}$, where $\x\rho_f\y$ holds if and only if $R_f(\x)=R_f(\y)$. The partition of $\mathbb{E}^{d}$ induced by $\rho_f$ is called the {\it farthest point Voronoi diagram} of $C$ denoted by $V_f(C)$. The equivalence classes of $V_f(C)$ are called the {\it faces} of $V_f(C)$ denoted by $V_S:=R_f^{-1}(S)$ for appropriate subsets $S$ of $C$. As usual we call the $0$-dimensional faces {\it vertices}, the $1$-dimensional faces {\it edges}, the $(d-1)$-dimensional faces {\it facets}, and the $d$-dimensional faces {\it farthest point Voronoi cells} of $V_f(C)$. It is well known (\cite{Se91}) that the (topological) closure of a face of $V_f(C)$ is a convex polyhedral set  (i.e., it is the intersection of a finite number of closed half-spaces) and the closures of the faces of $V_f(C)$ form a polyhedral complex (i.e., a finite family of convex polyhedral sets such that any face of a member of the family is again a member of the family, and if the intersection of two members is non-empty, then the intersection of them is a face of both members). 

Before we turn to the definition of basic $r$-ball polyhedra we need to recall another useful concept. Let  $C:=\{\cc_1,\dots ,\cc_n\}\subset\B^d(\p,r)$ for some $\p\in\mathbb{E}^{d}$, $d>1$ and $r>0$. Then let the {\it $r$-convex hull} ${\rm conv}_r(C)$ of $C$ be defined by ${\rm conv}_r(C):=\cap\{ \B^d[\x,r]\ |\ C\subset\B^d[\x,r]\}$. The following is a basic observation (that follows for example, from Corollary 3.4 of \cite{blnp} in a straightforward way):
$\B^d[\cc_1,r], \B^d[\cc_2,r], \dots , \B^d[\cc_n,r]$, $n>1$, $d>1$ form a reduced generating family for the $r$-ball polyhedron $\cap_{1\leq i\leq n}\B^d[\cc_i,r]$ with ${\rm int} \left( \cap_{1\leq i\leq n}\B^d[\cc_i,r]\right)\neq\emptyset$ if and only if the points of $C:=\{\cc_1, \cc_2, \dots ,\cc_n\}$ lie in {\it $r$-convex position}, i.e., $C\subset\B^d(\p,r)$ for some $\p\in\mathbb{E}^{d}$ and $r>0$ and $\cc_i\notin{\rm conv}_r(C\setminus\{\cc_i\})$ holds for all $1\leq i\leq n$. The following definition of {\it basic $r$-ball polyhedra} is an extension to higher dimensions of the analogue $3$-dimensional definition from \cite{Be2013a} and \cite{Be2013b}.

\begin{definition}\label{basic r-ball polyhedron}
Let $\mathbf{C}:={\rm conv}\left(\{\mathbf{c}_1,\mathbf{c}_2,\dots ,\mathbf{c}_n\}\right)$ be an arbitrary $d$-dimensional convex polytope with vertex set $C:=\{\mathbf{c}_1,\mathbf{c}_2,\dots ,\mathbf{c}_n\}$ in $\mathbb{E}^{d}$, where $n\geq d+1\geq 3$. Then, choose $r>0$ such that 
\begin{itemize}
\item the vertices of $\mathbf{C}$ lie in $r$-convex position,
and
\item every vertex of the farthest-point Voronoi diagram $V_f(C)$ of $C$ belongs to ${\rm int}\left(\mathbf{P}_{C, r}\right)$, where $\mathbf{P}_{C, r}:=\cap_{i=1}^{n}\mathbf{B}^d[\mathbf{c}_i, r]$.
\end{itemize}
Then $\mathbf{P}_{C, r}$ is called a {\rm $d$-dimensional basic $r$-ball polyhedron} having $r$ as its {\rm generating radius} and $\mathbf{C}={\rm conv} (C)$ as its {\it center-polytope}. If $V_S=R_f^{-1}(S)$ is a $k$-dimensional face of $V_f(C)$ for an appropriate subset $S$ of $C$ with $1\leq k\leq d$ such that $V_S\cap{\rm bd}(\mathbf{P}_{C, r})\neq\emptyset$, then $V_S\cap{\rm bd}(\mathbf{P}_{C, r})$ (resp., ${\rm cl}(V_S)\cap{\rm bd}(\mathbf{P}_{C, r})$) is called a {\rm $(k-1)$-dimensional face} (resp., {\rm closed $(k-1)$-dimensional face}) of the basic $r$-ball polyhedron $\mathbf{P}_{C, r}$. As usual we call the $0$-dimensional faces {\it vertices}, the $1$-dimensional faces {\it edges}, and the $(d-1)$-dimensional faces {\it facets} of $\mathbf{P}_{C, r}$. It follows that $\mathbf{P}_{C, r}$ possesses $n$ facets.
\end{definition}

\begin{remark}
Clearly, the closed faces of $\mathbf{P}_{C, r}$ introduced in Definition~\ref{basic r-ball polyhedron} together with $\emptyset$ and $\mathbf{P}_{C, r}$ (as degenerate faces) form a partially ordered set $L\left(\mathbf{P}_{C, r}\right)$ with respect to ordering by inclusion. As the intersection of two closed faces of $\mathbf{P}_{C, r}$ is either $\emptyset$ or a closed face of $\mathbf{P}_{C, r}$ therefore $L\left(\mathbf{P}_{C, r}\right)$ is a lattice - called the {\rm face lattice of $\mathbf{P}_{C, r}$} - with the meet operation $F\wedge F':=F\cap F'$ and the join operation $F\vee F':=\cap \{F''\in L\left(\mathbf{P}_{C, r}\right)\ |\ F\subseteq F''\ {\rm and}\ F'\subseteq F''\}$ for the closed faces $F,F'\in L\left(\mathbf{P}_{C, r}\right)$.
\end{remark}

Now, we are ready to state the promised combinatorial properties of the face lattices of $d$-dimensional basic $r$-ball polyhedra. Let $n>d>1$ and $\mathbf{C}(n,d)$ be the convex hull of any $n$ distinct points on the moment curve $(\tau, \tau^2, \dots , \tau^d)$, $-\infty<\tau<\infty$ in $\mathbb{E}^{d}$. Let $c_i(n,d)$ be the number of $i$-dimensional faces of $\mathbf{C}(n,d)$ for $0\leq i\leq d-1$. McMullen proved in \cite{Mc} that if $f_i$ is the number of $i$-dimensional faces of an arbitrary $d$-dimensional convex polytope with $n$ vertices, then $f_i\leq c_i(n,d)$, where $0\leq i\leq d-1$. The following theorem is an analogue of McMullen's upper bound theorem for basic $r$-ball polyhedra. 

\begin{theorem}\label{combinatorics of basic r-ball polyhedra}  Let $\mathbf{C}:={\rm conv}\left(\{\mathbf{c}_1,\mathbf{c}_2,\dots ,\mathbf{c}_n\}\right)$ be an arbitrary $d$-dimensional convex polytope with vertex set $C:=\{\mathbf{c}_1,\mathbf{c}_2,\dots ,\mathbf{c}_n\}$ in $\mathbb{E}^{d}$, where $n\geq d+1\geq 3$. If $f_{k-1}\left(\mathbf{P}_{C, r}\right)$ denotes the number of $(k-1)$-dimensional faces of a basic $r$-ball polyhedron $\mathbf{P}_{C, r}$ having $n$ facets, then $$f_{k-1}\left(\mathbf{P}_{C, r}\right) \leq c_{d-k}(n,d)$$ holds for all $0\leq k-1\leq d-1$.
\end{theorem}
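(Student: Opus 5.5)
The plan is to turn the count of faces of $\mathbf{P}_{C, r}$ into a count of faces of the farthest point Voronoi diagram $V_f(C)$, and then into a count of faces of the center-polytope $\mathbf{C}$ through the standard duality between $V_f(C)$ and the farthest point Delaunay complex. By Definition~\ref{basic r-ball polyhedron}, the $(k-1)$-dimensional faces of $\mathbf{P}_{C, r}$ correspond bijectively to those $k$-dimensional faces $V_S$ of $V_f(C)$ that satisfy $V_S\cap{\rm bd}(\mathbf{P}_{C, r})\neq\emptyset$. Hence $f_{k-1}(\mathbf{P}_{C, r})\leq f_k(V_f(C))$, where $f_k(V_f(C))$ is the number of $k$-dimensional faces of $V_f(C)$. (In fact equality should hold, because every vertex of $V_f(C)$ lies in ${\rm int}(\mathbf{P}_{C, r})$, while each face of dimension $k\geq 1$ is unbounded and must therefore leave the bounded set $\mathbf{P}_{C, r}$. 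Only the inequality is needed.)

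Next I would bound $f_k(V_f(C))$. Use the lifting map $\x\mapsto(\x,\|\x\|^2)$. The farthest point Voronoi diagram of $C$ is the projection of the upper envelope of the hyperplanes tangent to the paraboloid at the lifted points $\hat{\cc}_i:=(\cc_i,\|\cc_i\|^2)$. Equivalently, $\x$ lies in the cell of $\cc_i$ exactly when the affine function $\|\x\|^2-2\langle\x,\cc_i\rangle+\|\cc_i\|^2$ attains the maximum over $i$. This upper envelope is polar to the upper hull of $\hat C:=\{\hat{\cc}_1,\dots,\hat{\cc}_n\}$. The projection of that upper hull is the farthest point Delaunay complex $D_f(C)$, and a nonempty face $V_S$ of dimension $k$ corresponds to the cell ${\rm conv}(S)$ of $D_f(C)$, of dimension $d-k$. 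So $f_k(V_f(C))$ equals the number of $(d-k)$-dimensional cells of $D_f(C)$.

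The cells of $D_f(C)$ are the faces of the upper hull of $\hat C$, that is, the faces of the $(d+1)$-polytope ${\rm conv}(\hat C)$ visible from above. Every point of $C$ is a vertex of $\mathbf{C}$, so all $\hat{\cc}_i$ lie on the upper hull. Its projection is a polytopal subdivision of $\mathbf{C}$ whose vertex set is exactly $C$. It remains to bound the number of $(d-k)$-cells of a polytopal subdivision of a $d$-polytope with $n$ vertices, all in convex position, by $c_{d-k}(n,d)$.

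I expect this last step to be the main obstacle. First I would perturb $C$ into general position, which makes the upper hull simplicial. Each face of the original upper hull is the projection of a face of a polytope in $\mathbb{E}^{d+1}$, so the number of $j$-faces does not decrease under perturbation; it suffices to treat triangulations. Next, cone from a new point $\hat{\mathbf{p}}$ far below, for instance $\hat{\mathbf{p}}=(\mathbf{0},-M)$ with $M$ large. The polytope ${\rm conv}(\hat C\cup\{\hat{\mathbf{p}}\})$ is a simplicial $(d+1)$-polytope with $n+1$ vertices. Its boundary consists of the upper hull together with the cone over the boundary of the projected subdivision. Since $C$ is in convex position, that boundary is a simplicial $(d-1)$-sphere, which is combinatorially equivalent to the boundary of $\mathbf{C}$ after perturbation. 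This gives
\[
f_j(D_f(C))=f_j\bigl({\rm conv}(\hat C\cup\{\hat{\mathbf{p}}\})\bigr)-f_{j-1}({\rm bd}\,\mathbf{C})-\delta_{j,0},
\]
where $\delta_{j,0}=1$ if $j=0$ and $0$ otherwise, accounts for the extra vertex $\hat{\mathbf{p}}$. I would then express this quantity through the $h$-vector of the triangulated $d$-ball $D_f(C)$, which has all its vertices on the boundary. The aim is to use the known upper bound for triangulations of a $d$-ball with $n$ vertices, none interior: $h_i\leq\binom{n-d+i-2}{i}$ for $i\leq d/2$ together with the Dehn--Sommerville-type relations for balls. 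This should yield $f_{d-k}(D_f(C))\leq c_{d-k}(n,d)$, following McMullen's argument \cite{Mc}. Combining the three steps gives $f_{k-1}(\mathbf{P}_{C, r})\leq f_k(V_f(C))=f_{d-k}(D_f(C))\leq c_{d-k}(n,d)$.
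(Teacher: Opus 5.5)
Your chain $f_{k-1}(\mathbf{P}_{C,r})\le f_k(V_f(C))=f_{d-k}(D_f(C))\le c_{d-k}(n,d)$ breaks at the last inequality, which is false. The root cause is your parenthetical claim that every face of $V_f(C)$ of dimension $k\ge 1$ is unbounded. That claim is wrong. The cells of $D_f(C)$ that do not lie in ${\rm bd}(\mathbf{C})$ are dual to \emph{bounded} faces of $V_f(C)$. The closure of such a face is the convex hull of its vertices, all of which lie in the open convex set ${\rm int}(\mathbf{P}_{C,r})$. So a bounded face never meets ${\rm bd}(\mathbf{P}_{C,r})$ and contributes nothing to $f_{k-1}(\mathbf{P}_{C,r})$.

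The slack in your first inequality is exactly the number of interior $(d-k)$-cells of $D_f(C)$, and it is not small.
For $d=2$ and $n\ge 4$ points in general convex position, $D_f(C)$ is a triangulation of the $n$-gon with $2n-3$ edges, whereas $c_1(n,2)=n$.
For $d=3$, take the triangular bipyramid with apexes $(0,0,\pm h)$ and a small equilateral triangle of circumradius $\epsilon\ll h$ in the plane $z=0$. The sphere through both apexes and two equatorial vertices encloses the third equatorial vertex. The sphere through one apex and the three equatorial vertices does not enclose the other apex. Hence $D_f(C)$ consists of the three tetrahedra around the axis. It has $10$ edges and $9$ triangles, against $c_1(5,3)=9$ and $c_2(5,3)=6$, and $\mathbf{P}_{C,r}$ is basic for large $r$.
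More generally, when $\mathbf{C}=\mathbf{C}(n,d)$ the boundary of $\mathbf{C}$ alone already has $c_{d-k}(n,d)$ faces of dimension $d-k$. Any interior $(d-k)$-cell of $D_f(C)$ therefore pushes $f_{d-k}(D_f(C))$ above the bound. No $h$-vector estimate for triangulated $d$-balls can give the inequality you want.

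The missing idea is to discard the bounded faces. The faces $V_S$ with $V_S\cap{\rm bd}(\mathbf{P}_{C,r})\neq\emptyset$ are exactly the unbounded ones. These are dual to the cells of the boundary complex $\partial D_f(C)$ (Part (ii) of Theorem~\ref{basic properties-1}). So $f_{k-1}(\mathbf{P}_{C,r})$ equals the number of $(d-k)$-cells of $\partial D_f(C)$. That complex is a polytopal subdivision of the $(d-1)$-sphere ${\rm bd}(\mathbf{C})$ with vertex set $C$. Triangulating it without new vertices (e.g.\ by pulling) does not decrease face numbers. Stanley's Upper Bound Theorem for simplicial $(d-1)$-spheres with $n$ vertices then gives the bound $c_{d-k}(n,d)$. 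This is the paper's argument. Your lifting and coning construction is not needed once you work with the boundary sphere instead of the ball $D_f(C)$.
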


\begin{remark}\label{equality}
It follows from the proof of Theorem~\ref{combinatorics of basic r-ball polyhedra} in a straightforward way that for the choice $\mathbf{C}:=\mathbf{C}(n,d)$ in Definition~\ref{basic r-ball polyhedron} the upper bounds of Theorem~\ref{combinatorics of basic r-ball polyhedra} are sharp.
\end{remark}

In order to state the promised metric property of $d$-dimensional basic $r$-ball polyhedra let us recall the following. According to the well-known theorem of Alexandrov \cite{Al} if $\mathbf{P}$ and $\mathbf{P'}$ are combinatorially equivalent convex polyhedra with equal corresponding face angles in $\mathbb{E}^{3}$, then $\mathbf{P}$ and $\mathbf{P'}$ have equal corresponding inner dihedral angles. As is well known Alexandrov theorem implies Cauchy's rigidity theorem (see for example, Theorem 26.8 and the discussion followed in \cite{Pa}) according to which if two convex polyhedra in $\mathbb{E}^{3}$ are combinatorially equivalent with the corresponding faces being congruent, then $\mathbf{P}$ is congruent to $\mathbf{P'}$. These theorems motivate and support the following analogue of them for $d$-dimensional basic $r$-ball polyhedra. For stating that result we need 

\begin{definition}\label{dihedral angle and global rigidity}
Let $\mathbf{P}_{C, r}$ be a basic $r$-ball-polyhedron of $\mathbb{E}^{d}$, $d\geq 3$. Moreover, let $V_S\cap{\rm bd}(\mathbf{P}_{C, r})$ be an arbitrary $(d-2)$-dimensional face of  $\mathbf{P}_{C, r}$ and let $\p\in V_S\cap{\rm bd}(\mathbf{P}_{C, r})$, where $S$ is a proper (unordered) pair of points from $C$. (See Parts (i) and (ii) of Theorem~\ref{basic properties-1}.) Then the two closed balls of radius $r$ centered at the points of $S$ are generating balls of $\mathbf{P}_{C, r}$ the boundaries of which contain $V_S\cap{\rm bd}(\mathbf{P}_{C, r})$. Now, the {\rm inner dihedral angle} assigned to $V_S\cap{\rm bd}(\mathbf{P}_{C, r})$ is the angular measure of the intersection of the two half-spaces supporting the two generating balls of radius $r$ at $\p$. Finally, we say that $\mathbf{P}_{C, r}$ is {\rm globally rigid with respect to its inner dihedral angles within the family of $d$-dimensional basic $r$-ball polyhedra} if the following holds. If $\mathbf{P}_{C', r}$ is another basic $r$-ball polyhedron of $\mathbb{E}^{d}$ whose face lattice $L\left(\mathbf{P}_{C', r}\right)$ is isomorphic to $L\left(\mathbf{P}_{C, r}\right)$ and whose inner dihedral angles are equal to the corresponding inner dihedral angles of $\mathbf{P}_{C, r}$, then $\mathbf{P}_{C', r}$ is congruent to  $\mathbf{P}_{C, r}$.
\end{definition}

\begin{theorem}\label{global rigidity of normal ball polyhedra}
Every basic $r$-ball polyhedron of $\mathbb{E}^{d}$, $d\geq 3$ is globally rigid with respect to its inner dihedral angles within the family of $d$-dimensional basic $r$-ball polyhedra.
\end{theorem}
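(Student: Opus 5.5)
The plan is to translate the inner dihedral angle data into edge-length data of the center-polytope, and then to invoke a rigidity theorem for convex polytopes. The basic tool will be the structure results of Theorem~\ref{basic properties-1}. They say that the $(d-2)$-dimensional faces of $\mathbf{P}_{C, r}$ correspond to the $(d-1)$-dimensional faces $V_S$ of $V_f(C)$, with $S=\{\cc_i,\cc_j\}$. By the farthest-point Voronoi/Delaunay duality, these in turn correspond exactly to the edges $[\cc_i,\cc_j]$ of the farthest point Delaunay complex of $C$. Since $C$ is in convex position, this complex is a triangulation of (a refinement of) the boundary structure of $\mathbf{C}$, whose edges include all edges of $\mathbf{C}$. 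More generally, $L(\mathbf{P}_{C, r})$ is anti-isomorphic to the face lattice of the farthest point Delaunay complex. Hence an isomorphism $L(\mathbf{P}_{C', r})\cong L(\mathbf{P}_{C, r})$ induces a combinatorial equivalence between the farthest point Delaunay complexes of $C'$ and $C$, and a bijection $\cc_i\mapsto\cc_i'$ respecting edges.

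Next I compute the inner dihedral angle. At $\p\in V_S\cap{\rm bd}(\mathbf{P}_{C, r})$ we have $\|\p-\cc_i\|=\|\p-\cc_j\|=r$. The supporting half-spaces of $\B^d[\cc_i,r]$ and $\B^d[\cc_j,r]$ at $\p$ have inner normals $\cc_i-\p$ and $\cc_j-\p$. The angular measure of the intersection of these half-spaces is therefore $\pi-\angle \cc_i\p\cc_j$. The triangle $\cc_i\p\cc_j$ is isosceles with legs $r$, so
$$\|\cc_i-\cc_j\|=2r\sin\Bigl(\tfrac{1}{2}\angle \cc_i\p\cc_j\Bigr)=2r\cos\Bigl(\tfrac{\alpha_{ij}}{2}\Bigr),$$
where $\alpha_{ij}$ is the inner dihedral angle. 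This value is independent of the choice of $\p$. Thus equal dihedral angles, with the same $r$, force equal lengths for every edge of the farthest point Delaunay complex, and in particular for every edge of the center-polytopes.

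It remains to show that $C$ and $C'$ are congruent. Then $\mathbf{P}_{C', r}$, being the intersection of the radius-$r$ balls about $C'$, is congruent to $\mathbf{P}_{C, r}$. The farthest point Delaunay complex is a polyhedral subdivision of $\mathbf{C}$ with vertex set $C$. Its boundary complex has triangulated $2$-faces whose edge lengths are all known. For $d\ge3$, the simplest route is to apply Cauchy's rigidity theorem, in its $d$-dimensional form for convex polytopes (Alexandrov), to the convex polytopes $\mathbf{C}$ and $\mathbf{C}'$. These are combinatorially equivalent, since the equivalence of the Delaunay complexes, including the labeling of boundary faces, carries over. If their $2$-faces are congruent, then the polytopes are congruent, and for $d>3$ one argues by induction on facets (Cauchy's theorem extends to higher dimensions). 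The $2$-faces of $\mathbf{C}$ are convex polygons subdivided by the Delaunay complex into triangles with known side lengths. Gluing these triangles within a plane determines each polygon up to congruence, so each face is congruent to its counterpart.

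The main obstacle is this last step: justifying that knowing the edge lengths of the Delaunay cells determines the faces of $\mathbf{C}$, and that the combinatorial equivalence of face lattices correctly identifies which Delaunay cells lie on which faces of $\mathbf{C}$. An alternative that avoids this is to argue directly with the Delaunay complex. Every farthest point Delaunay cell is inscribed in a sphere and spans full dimension, so it triangulates into $d$-simplices whose edges are all Delaunay edges. A simplex is determined up to congruence by its edge lengths, and adjacent simplices glued along common facets are then forced into a congruent configuration, given that the complex is connected through facets. I would try this simplicial gluing argument first, because it uses only edge lengths and the combinatorics and bypasses Cauchy's theorem. The subtle point to check there is orientation consistency of the gluing: it follows because each cell lies on a definite side determined by convexity of $\mathbf{C}$.
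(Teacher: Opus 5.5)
Your formula $\|\cc_i-\cc_j\|=2r\cos(\alpha_{ij}/2)$ is correct, and it makes explicit the paper's remark that equal dihedral angles mean equal edge lengths. The problem is that you apply it to too many edges. By Part (ii) of Theorem~\ref{basic properties-1}, the $(d-2)$-faces of $\mathbf{P}_{C,r}$ correspond only to the edges of the \emph{boundary} complex $\partial D_f(C)$. Likewise, $L(\mathbf{P}_{C,r})$ is anti-isomorphic to $\partial D_f(C)$, not to $D_f(C)$.

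An edge of $D_f(C)$ not lying in ${\rm bd}(\mathbf{C})$ is dual to a \emph{bounded} $(d-1)$-face of $V_f(C)$. All vertices of $V_f(C)$ lie in ${\rm int}(\mathbf{P}_{C,r})$, so that whole face lies in the interior. It therefore gives neither a face of $\mathbf{P}_{C,r}$ nor a dihedral angle. Such edges do occur. Take the bipyramid over an equilateral triangle of circumradius $\rho$, with apices at distance $h>\rho$ above and below its center. Then the circumsphere of the equator and one apex misses the other apex, and $D_f(C)$ consists of the three tetrahedra around the axis joining the apices.

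Consequently the simplicial gluing argument you intend to try first has no input. The hypotheses supply none of the following:
\begin{itemize}
\item the lengths of interior Delaunay edges;
\item the combinatorics of the $d$-cells of $D_f(C)$;
\item the lengths of the diagonals added when triangulating non-simplicial cells, which are not Delaunay edges at all.
\end{itemize}

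Your first route has a genuine gap exactly where you flag it, and bookkeeping will not close it. Cells of $\partial D_f(C)$ need not be faces of $\mathbf{C}$: a facet whose vertices are not cospherical is cut into several coplanar cells. Coplanarity is metric information that the lattice isomorphism does not transport. For instance, for a pyramid over a non-cyclic quadrilateral, the base is split by a diagonal, and $\partial D_f(C)$ is combinatorially the boundary of a triangular bipyramid. So you cannot assume that $\mathbf{C}$ and $\mathbf{C}'$ are combinatorially equivalent, and Cauchy's theorem applied to their genuine faces is not available.

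There is a second issue. The cells of $\partial D_f(C)$ are inscribed polytopes, not necessarily triangles or simplices, so even their congruence needs an argument. This is the paper's Lemma~\ref{special Cauchy}, proved by induction using Alexandrov's theorem on polytopes with congruent facets.

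The missing idea is the one the paper uses. Once corresponding cells of $\partial D_f(C)$ and $\partial D_f(C')$ are congruent and glued face-to-face in the same pattern, they assemble into an intrinsic isometry ${\rm bd}(\mathbf{C})\to{\rm bd}(\mathbf{C}')$. Alexandrov's uniqueness theorem for convex polytopes with intrinsically isometric boundaries then gives $\mathbf{C}\cong\mathbf{C}'$, without ever needing to know which cells are coplanar.
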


The proofs of Theorems~\ref{combinatorics of basic r-ball polyhedra} and~\ref{global rigidity of normal ball polyhedra} are given in Section~\ref{combinatorial-metric}. Finally, Section~\ref{open-problems} raises relevant open questions.

\section{More on the faces of basic $r$-ball polyhedra}

First, we recall the following concept from \cite{blnp}. Let $\mathbf{P}=\cap_{i=1}^{n}\mathbf{B}^d[\mathbf{c}_i, r]$ be an $r$-ball polyhedron with the reduced system of generating balls $\mathbf{B}^d[\mathbf{c}_1, r], \dots , \mathbf{B}^d[\mathbf{c}_n, r]$ in $\mathbb{E}^{d}$, $d\geq 3, n>1, r>0$. We call $S^{d-1}(\mathbf{c}_i, r):={\rm bd} \left(\mathbf{B}^d[\mathbf{c}_i, r]\right)$ a {\it generating sphere} of $\mathbf{P}$, where $1\leq i\leq n$. The $l$-dimensional sphere $S^l(\mathbf{p}, s)$, $0\leq l\leq d-1$ with center $\mathbf{p}$ and radius $s\geq 0$ in $\mathbb{E}^{d}$ is called a {\it supporting sphere} of $\mathbf{P}$ if it is an intersection of some generating spheres of $\mathbf{P}$ such that $S^l(\mathbf{p}, s)\cap\mathbf{P}\neq\emptyset$. We say that $\mathbf{P}$ is a {\it standard $r$-ball polyhedron} if  for any supporting sphere $S^l(\mathbf{p}, s)$ of $\mathbf{P}$ the intersection $S^l(\mathbf{p}, s)\cap\mathbf{P}$ is homeomorphic to a closed Euclidean ball of some dimension. We shall prove in Theorem~\ref{basic properties-1} that for given $d$-dimensional convex polytope $\mathbf{C}:={\rm conv}\left(\{\mathbf{c}_1,\mathbf{c}_2,\dots ,\mathbf{c}_n\}\right)$ with vertices $C:=\{\mathbf{c}_1,\mathbf{c}_2,\dots ,\mathbf{c}_n\}$ in $\mathbb{E}^{d}$, where $n\geq d+1\geq 3$, if $r>0$ is sufficiently large, then the basic $r$-ball polyhedron $\mathbf{P}_{C, r}$ is also a standard $r$-ball polyhedron. On the other hand, it is not hard to see that there are standard $r$-ball polyhedra that are not basic ones. (For $3$-dimensional examples see page 91 in \cite{Be2013a}).

Next, let $\mathbf{P}_{C, r}=\cap_{i=1}^{n}\mathbf{B}^d[\mathbf{c}_i, r]$ be a basic $r$-ball polyhedron with $n$ facets and center-polytope $\mathbf{C}={\rm conv} (C)={\rm conv}\left(\{\mathbf{c}_1,\mathbf{c}_2,\dots ,\mathbf{c}_n\}\right)$ in $\mathbb{E}^{d}$, where $n\geq d+1\geq 3$. Based on Definition~\ref{basic r-ball polyhedron}, if $V_S=R_f^{-1}(S)$ is a $k$-dimensional face of the farthest point Voronoi diagram $V_f(C)$ for an appropriate subset $S$ of $C$ with $1\leq k\leq d$ such that $V_S\cap{\rm bd}(\mathbf{P}_{C, r})\neq\emptyset$, then $V_S\cap{\rm bd}(\mathbf{P}_{C, r})$ is called a $(k-1)$-dimensional face of the $r$-ball polyhedron $\mathbf{P}_{C, r}$. Moreover, every $(k-1)$-dimensional face of $\mathbf{P}_{C, r}$ is obtained in this way. Next, let us define
$$D_f(C):=\{{\rm conv}(S)\ |\ S\subseteq C\ {\rm and}\  V_S\neq\emptyset\}.$$
Recall (\cite{Se91}) that $D_f(C)$ is called the {\it farthest point Delaunay complex} of $C$, which is a polyhedral complex with the relative interiors of the members partitioning $\mathbf{C}={\rm conv} (C)$ in a face-to-face way. Moreover, each convex polytope of $D_f(C)$ is inscribed in a sphere. (For more properties of $D_f(C)$ see for example, \cite{AK00} or \cite{Se91}.) We need also
$$\partial D_f(C):=\{{\rm conv}(S)\ |\ {\rm conv}(S)\in D_f(C)  \ {\rm and}\  {\rm conv}(S)\subset {\rm bd}\left(\mathbf{C}\right)\},$$
which is a polyhedral complex as well with the relative interiors of the members partitioning ${\rm bd}\left(\mathbf{C}\right)$ in a face-to-face way. We call it the {\it farthest point boundary Delaunay complex} of $C$. From our point of view the most important properties of $D_f(C)$ and $\partial D_f(C)$ are the following. 

\begin{theorem}\label{basic properties-1}
Let $\mathbf{C}:={\rm conv}\left(\{\mathbf{c}_1,\mathbf{c}_2,\dots ,\mathbf{c}_n\}\right)$ be an arbitrary $d$-dimensional convex polytope with vertex set $C:=\{\mathbf{c}_1,\mathbf{c}_2,\dots ,\mathbf{c}_n\}$ in $\mathbb{E}^{d}$, where $n\geq d+1\geq 3$. Next, choose $r>0$ such that $\mathbf{P}_{C, r}=\cap_{i=1}^{n}\mathbf{B}^d[\mathbf{c}_i, r]$ is a basic $r$-ball polyhedron with $n$ facets having $\mathbf{C}={\rm conv} (C)$ as its center-polytope. Then the faces of $\mathbf{P}_{C, r}$ and the underlying farthest point Voronoi diagram $V_f(C)$ have the following properties.
\begin{itemize}
\item[(i)] $V_S=R_f^{-1}(S)$ is a $k$-dimensional face of $V_f(C)$ for an appropriate subset $S$ of $C$ if and only if ${\rm conv}(S)\in D_f(C)$ is a $(d-k)$-dimensional convex polytope, where $0\leq k\leq d$.
\item[(ii)] $V_S\cap{\rm bd}(\mathbf{P}_{C, r})$ is a $(k-1)$-dimensional face of $\mathbf{P}_{C, r}$ if and only if ${\rm conv}(S)\in \partial D_f(C)$ with ${\rm dim}\left({\rm conv}(S)\right)=d-k$, where $1\leq k\leq d$.
\item[(iii)] For given center-polytope $\mathbf{C}$ there exists $r_0>0$ such that for all $r\geq r_0$ the basic $r$-ball polyhedron $\mathbf{P}_{C, r}$ is also a standard $r$-ball polyhedron of $\mathbb{E}^{d}$ with every $(k-1)$-dimensional face $V_S\cap{\rm bd}(\mathbf{P}_{C, r})$ of $\mathbf{P}_{C, r}$ homeomorphic to a ball of dimension $k-1$ for $1\leq k\leq d$. 
\end{itemize}
\end{theorem}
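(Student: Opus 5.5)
Part (i) is the classical duality between the farthest point Voronoi diagram and the farthest point Delaunay complex, so I plan to prove it by lifting. Map each $\cc_i$ to $\hat\cc_i:=(\cc_i,\|\cc_i\|^2)\in\mathbb{E}^{d+1}$. For $\x\in\mathbb{E}^d$ we have $\|\x-\cc_i\|^2=\|\x\|^2-2\langle\x,\cc_i\rangle+\|\cc_i\|^2$. Hence $R_f(\x)=S$ holds exactly when the affine function $\y\mapsto -2\langle\x,\y\rangle+\|\y\|^2$, viewed as a linear functional on the lifted points, attains its maximum over $\hat C$ precisely on $\hat S$. The faces of $V_f(C)$ therefore correspond to the normal cones of the \emph{upper} faces of $\hat{\mathbf C}:={\rm conv}(\hat C)$, namely those with outer normals whose last coordinate is positive. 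The projections of these upper faces are exactly the members ${\rm conv}(S)$ of $D_f(C)$. Because the normal cone of a $j$-face of a $(d+1)$-polytope has dimension $d+1-j$, slicing with the affine hyperplane (last coordinate $=1$) yields a $(d-j)$-dimensional cell. This gives $\dim V_S=d-\dim{\rm conv}(S)$. Vertical projection is injective on upper faces, since they are graphs, so dimensions are preserved. This proves (i).

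For part (ii), I need to show that $V_S$ meets ${\rm bd}(\mathbf P_{C,r})$ iff $V_S$ is unbounded, and that this holds iff ${\rm conv}(S)\subset{\rm bd}(\mathbf C)$. Note that $\x\in\mathbf P_{C,r}$ iff $\max_i\|\x-\cc_i\|\le r$, so ${\rm bd}(\mathbf P_{C,r})$ is the level set $\{\max_i\|\x-\cc_i\|=r\}$. On ${\rm cl}(V_S)$ this maximum equals the distance to any $\cc\in S$, which is a convex function. Suppose $V_S$ is bounded. Then ${\rm cl}(V_S)$ is the convex hull of its vertices, and those vertices are vertices of $V_f(C)$, which lie in ${\rm int}(\mathbf P_{C,r})$ by Definition~\ref{basic r-ball polyhedron}. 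By convexity the distance to $\cc$ stays below $r$ on ${\rm cl}(V_S)$, so $V_S$ misses the boundary. If instead $V_S$ is unbounded, the function tends to infinity along $V_S$. The relative interior of $V_S$ also contains points near interior points, since ${\rm cl}(V_S)$ contains a vertex lying in ${\rm int}(\mathbf P_{C,r})$. A vertex exists because ${\rm cl}(V_S)$ is pointed: $V_f(C)$ has vertices when $C$ is full-dimensional. Connectedness then gives a boundary point in $V_S$ itself.

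It remains to show that $V_S$ is unbounded iff ${\rm conv}(S)\subset{\rm bd}(\mathbf C)$. The recession cone of ${\rm cl}(V_S)$ consists of the directions $\mathbf u$ such that $\langle -\mathbf u,\cdot\rangle$ is maximized over $C$ on a set containing $S$. A nonzero such $\mathbf u$ exists iff $S$ lies in a proper face of $\mathbf C$, that is, iff ${\rm conv}(S)\subset{\rm bd}(\mathbf C)$. Here I use that the relevant minimal face of $\mathbf C$ is the one determined by ${\rm conv}(S)$. Combining this with (i) and the shift $k\mapsto k-1$ gives (ii). This step is routine but fiddly: one must match faces of $V_S$ versus ${\rm cl}(V_S)$, and one must check that the minimal face containing $S$ yields directions lying in the relative interior of the recession cone.

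Part (iii) is the main obstacle. My plan is to show that for large $r$ each face $F=V_S\cap{\rm bd}(\mathbf P_{C,r})$ is a radial graph over the unit directions of the recession cone of ${\rm cl}(V_S)$, and hence a topological ball of dimension $k-1$. Fix a point $\mathbf o$ in the relative interior of a bounded part of ${\rm cl}(V_S)$. Every ray from $\mathbf o$ in a recession direction $\mathbf u$ stays in ${\rm cl}(V_S)$. Along such a ray the distance to $\cc\in S$ is strictly convex and starts below $r$, so the ray meets the sphere exactly once. For large $r$, every boundary point of ${\rm cl}(V_S)\cap{\rm bd}(\mathbf P_{C,r})$ is reached in this way. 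Concretely, for $r\ge r_0$ I would prove that ${\rm cl}(V_S)\cap\mathbf P_{C,r}$ is star-shaped from $\mathbf o$ and that its bounded portion lies inside ${\rm int}(\mathbf P_{C,r})$. I would control the rescaled set $\frac1r({\rm cl}(V_S)-\mathbf o)$, which converges to the recession cone, and argue transversality by a compactness argument. The resulting face is then homeomorphic to the spherical section of a pointed $k$-dimensional cone, which is a $(k-1)$-ball. Closed faces are handled the same way. For supporting spheres, the intersection of generating spheres for $S$ meets $\mathbf P_{C,r}$ exactly in ${\rm cl}(V_S)\cap{\rm bd}(\mathbf P_{C,r})$, so standardness follows.
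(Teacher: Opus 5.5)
Your parts (i) and (ii) are sound, and more self-contained than the paper's. The paper cites (i), and it proves (ii) through the inclusion-reversing Voronoi--Delaunay duality, unbounded edges, and spheres converging to a supporting hyperplane. Your criterion, that $V_S$ is unbounded iff $S$ lies in a proper face of $\mathbf C$, is exactly the formula ${\rm rec}(V_S)=-N_{\mathbf C}({\rm conv}(S))$ that the paper imports from Goberna et al.\ for (iii).

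\textbf{The gap is in (iii), at its core.} Your radial-graph claim is false. Write ${\rm cl}(V_S)=\{\x\ |\ \langle\mathbf n_i,\x\rangle\le b_i,\ i\in I\}$ inside its affine hull. Since $\mathbf o$ is a relative interior point, each inequality is strict at $\mathbf o$, and $\langle\mathbf n_i,\mathbf u\rangle\le 0$ for every recession direction $\mathbf u$. Hence $\mathbf o+{\rm rec}({\rm cl}(V_S))$ lies in the relative interior of ${\rm cl}(V_S)$. The relative boundary of $F$ in the sphere is nonempty for $k\ge 2$, and it lies on the relative boundary of ${\rm cl}(V_S)$. So your rays never reach it, for any $r$.

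Radial projection from $\mathbf o$ does embed $F$ in the unit sphere. Its image, however, is strictly larger than ${\rm rec}\cap S^{k-1}$, and proving that this image is a ball is precisely what (iii) asks. Your fallback (the rescaled sets converge to the recession cone, plus ``transversality by compactness'') does not supply this. Hausdorff convergence yields no homeomorphism. Moreover, a convex polyhedral set can meet a huge sphere in a non-ball: a slab through the center gives a band. So the pointedness and full-dimensionality of the recession cone must enter through a concrete mechanism.

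\textbf{The paper's mechanism} is an illuminating pole. Pick $\q$ strictly outside every facet half-space $H_i^+$ of ${\rm cl}(V_S)$. This is possible because the recession cone is full-dimensional: take $\q=\mathbf o-t\mathbf w$ with $\mathbf w$ in its interior and $t$ large. Move $\q$ along a ray from an interior point to a point $\q^*$ of the sphere; it stays outside all $H_i^+$. Project stereographically from $\q^*$. Each cap $H_i^+\cap S^l(\p^*,s^*)$ misses the pole, so it maps to a closed Euclidean ball. Thus $F$ maps to an intersection of balls, a compact convex set with nonempty interior.

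\textbf{Repairing your approach.} If you want to keep star-shapedness, center it on the sphere rather than at $\mathbf o$. With $\mathbf w$ as above, $\langle\mathbf n_i,\mathbf w\rangle<0$ for all $i$. Along each great-circle arc $\x(\theta)=\p^*+s(\cos\theta\,\mathbf w+\sin\theta\,\mathbf e)$, $0\le\theta\le\pi$, the quantity $\langle\mathbf n_i,\x(\theta)\rangle-b_i$ equals $s f_i(\theta)-\gamma_i$. Here $\gamma_i$ does not depend on $s$, and $f_i$ is a sinusoid running from $\langle\mathbf n_i,\mathbf w\rangle<0$ to $-\langle\mathbf n_i,\mathbf w\rangle>0$. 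For large $s$ this quantity changes sign exactly once, and transversally. Hence $F$ is geodesically star-shaped from $\p^*+s\mathbf w$ with a continuous radial function, and so it is a closed $(k-1)$-ball.
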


\begin{proof}[Proof of Theorem~\ref{basic properties-1}:]
Part (i) is well known. (See for example, \cite{AK00} or \cite{Se91}.) In order to prove Part (ii) let us assume that $V_S\cap{\rm bd}(\mathbf{P}_{C, r})$ is a $(k-1)$-dimensional face of $\mathbf{P}_{C, r}$, where $1\leq k\leq d$. Here $V_S=R_f^{-1}(S)$ is a $k$-dimensional face of $V_f(C)$ for an appropriate subset $S$ of $C$, which must possess an unbounded edge say, $e$ (because, every vertex of $V_f(C)$ lies in ${\rm int}\left(\mathbf{P}_{C, r}\right)$ and $V_S\cap{\rm bd}(\mathbf{P}_{C, r})\neq\emptyset$). By Part (i) there exists a one-to-one map $\phi$ from the family of closed faces of $V_f(C)$ onto the family of all members of the polyhedral complex $D_f(C)$ that is inclusion reversing. So, by Part (i) we have that $\phi\left({\rm cl}(V_S)\right)={\rm conv}(S)\in D_f(C)$ with ${\rm dim}\left({\rm conv}(S)\right)=d-k$. Now, let us assume that ${\rm conv}(S)\in D_f(C)\setminus\partial D_f(C)$. We are going to derive a contradiction from it as follows. By Part (i) $\phi\left({\rm cl}(e)\right)\in D_f(C)$ is a $(d-1)$-dimensional convex polytope (inscribed in a sphere) that contains ${\rm conv}(S)$. As ${\rm conv}(S)\notin\partial D_f(C)$ therefore  $\phi\left({\rm cl}(e)\right)\in D_f(C)\setminus\partial D_f(C)$ implying that there exist two $d$-dimensional convex polytopes $\mathbf{Q}_1\in D_f(C)$ and $\mathbf{Q}_2\in D_f(C)$ such that $\phi\left({\rm cl}(e)\right)= \mathbf{Q}_1\cap \mathbf{Q}_2$. Again by Part (i) $\phi^{-1}(\mathbf{Q}_1)$ and  $\phi^{-1}(\mathbf{Q}_2)$ are two vertices of ${\rm cl(e)}=\phi^{-1}\left(\phi\left({\rm cl}(e)\right)\right)$ implying that $e$ is bounded, a contradiction. Thus, we have shown that if $V_S\cap{\rm bd}(\mathbf{P}_{C, r})$ is a $(k-1)$-dimensional face of $\mathbf{P}_{C, r}$, then ${\rm conv}(S)\in \partial D_f(C)$ with ${\rm dim}\left({\rm conv}(S)\right)=d-k$, where $1\leq k\leq d$. For the proof of the other direction let us take $S\subset C$ such that ${\rm conv}(S)\in \partial D_f(C)$ with ${\rm dim}\left({\rm conv}(S)\right)=d-k$, where $1\leq k\leq d$. By Part (i) $V_S=R_f^{-1}(S)$ is a $k$-dimensional face of $V_f(C)$. In order to show that $V_S\cap{\rm bd}(\mathbf{P}_{C, r})$ is a $(k-1)$-dimensional face of $\mathbf{P}_{C, r}$, we need to prove that $V_S\cap{\rm bd}(\mathbf{P}_{C, r})\neq\emptyset$. As ${\rm conv}(S)\in \partial D_f(C)$ therefore there exists a supporting hyperplane $H$ of $\mathbf{C}={\rm conv} (C)$ in $\mathbb{E}^{d}$ with $S\subset H$. As ${\rm conv}(S)$ is a convex polytope inscribed in a sphere therefore there exists a sequence of $(d-1)$-dimensional spheres $S^{d-1}(\p_n,r_n)$ for $n=1,2,\dots$ such that $\lim_{n\to +\infty} S^{d-1}(\p_n,r_n) =H$ with the closed $d$-dimensional ball bounded by $S^{d-1}(\p_n,r_n)$ containing $\mathbf{C}$ for all $n=1,2,\dots$. It follows that $\{\p_1,\p_2,\dots, \p_n,\dots\}\subset R_f^{-1}(S)$ implying that $V_S$ is an unbounded face of $V_f(C)$ and therefore $V_S\cap{\rm bd}(\mathbf{P}_{C, r})\neq\emptyset$. This completes the proof of Part (ii).

Finally, in order to prove Part (iii) let $\mathbf{P}_{C, r}$ be a $d$-dimensional basic $r$-ball polyhedron with center-polytope $\mathbf{C}={\rm conv} (C)$ having vertices $C:=\{\mathbf{c}_1,\mathbf{c}_2,\dots ,\mathbf{c}_n\}$ in $\mathbb{E}^{d}$, where $n\geq d+1\geq 3$ and $r>0$. Then let $S^l(\mathbf{p}, s):=S^{d-1}(\mathbf{c}_{i_1}, r)\cap S^{d-1}(\mathbf{c}_{i_2}, r)\cap\dots\cap S^{d-1}(\mathbf{c}_{i_k}, r)$ be an arbitrary $l$-dimensional supporting sphere of $\mathbf{P}_{C, r}$ with $F:=S^l(\mathbf{p}, s)\cap\mathbf{P}_{C, r}=S^l(\mathbf{p}, s)\cap{\rm bd}(\mathbf{P}_{C, r})  \neq\emptyset$, $1\leq i_1\leq i_2\leq \dots\leq i_k\leq n$, and $0\leq l\leq d-1$. As $S^{d-1}(\mathbf{c}_{i_j}, r)\cap{\rm bd}(\mathbf{P}_{C, r})={\rm cl}\left(R_f^{-1}(\mathbf{c}_{i_j})\right)\cap{\rm bd}(\mathbf{P}_{C, r})$ holds for all $1\leq j\leq k$ therefore
\begin{equation}\label{supporting-0}
\emptyset\neq F=S^l(\mathbf{p}, s)\cap{\rm bd}(\mathbf{P}_{C, r})={\rm bd}(\mathbf{P}_{C, r})\bigcap\left(\cap_{j=1}^{k}{\rm cl}\left(R_f^{-1}(\mathbf{c}_{i_j})\right) \right)
\end{equation}
\begin{equation}\label{supporting-1}
={\rm bd}(\mathbf{P}_{C, r})\bigcap {\rm cl}\left(R_f^{-1}(\{\mathbf{c}_{i_1}, \mathbf{c}_{i_2}, \dots , \mathbf{c}_{i_k}\})\right)
\end{equation} 
is a closed face of $\mathbf{P}_{C, r}$ in the sense of Definition~\ref{basic r-ball polyhedron}. Here by Part (ii) we get that ${\rm conv}(\{\mathbf{c}_{i_1}, \mathbf{c}_{i_2}, \dots , \mathbf{c}_{i_k}\})\in \partial D_f(C)$ and
\begin{equation}\label{supporting-2}
{\rm dim}(F)=d-{\rm dim}\left({\rm aff}(\{\mathbf{c}_{i_1}, \mathbf{c}_{i_2}, \dots , \mathbf{c}_{i_k}\})\right)-1=l.
\end{equation}
Next, we recall the following concept. If $P$ is a convex polyhedral set in $\mathbb{E}^{d}$, then the {\it recession cone} ${\rm rec}(P)$ of $P$ is defined by ${\rm rec}(P):=\{\mathbf{y}\in\mathbb{E}^{d}\ |\ \mathbf{x}+\lambda\mathbf{y}\in P{\rm\ for\ all\ }\lambda\geq 0{\rm\ and\ }\mathbf{x}\in P\}$. We observe in Lemma~\ref{rec1} that the faces of the farthest point Voronoi diagram $V_f(C)$ have top dimensional recession cones relative to the affine hulls of the faces and note that this property is the driving force of the proof of Lemma~\ref{rec2}.
\begin{lemma}\label{rec1}
The recession cone ${\rm rec}\left( {\rm cl}\left(R_f^{-1}(\{\mathbf{c}_{i_1}, \mathbf{c}_{i_2}, \dots , \mathbf{c}_{i_k}\})\right)\right)$ of the $(l+1)$-dimensional closed face ${\rm cl}\left(R_f^{-1}(\{\mathbf{c}_{i_1}, \mathbf{c}_{i_2}, \dots , \mathbf{c}_{i_k}\})\right)$ of $V_f(C)$ is a convex polyhedral cone which is pointed (i.e., contains no lines) furthermore, its dimension is equal to $$d-{\rm dim}\left({\rm aff}(\{\mathbf{c}_{i_1}, \mathbf{c}_{i_2}, \dots , \mathbf{c}_{i_k}\})\right)=l+1.$$
\end{lemma}
\begin{proof}[Proof of Lemma~\ref{rec1}:]
Let $S:=\{\mathbf{c}_{i_1}, \mathbf{c}_{i_2}, \dots , \mathbf{c}_{i_k}\}$. Then $V_S=R_f^{-1}(S)$ is an $(l+1)$-dimensional face of $V_f(C)$ generating the $l$-dimensional face $F= {\rm bd}(\mathbf{P}_{C, r})\bigcap {\rm cl}\left(R_f^{-1}(S)\right)$ for $\mathbf{P}_{C, r}$. Thus, Part (ii) implies that ${\rm conv}(S)\in \partial D_f(C)$ is a $(d-l-1)$-dimensional face of the center-polytope $\mathbf{C}$. Now, let $\mathbf{s}\in{\rm relint}\left({\rm conv}(S)\right)$ be an arbitrary relative interior point of ${\rm conv}(S)$. Then the {\it normal cone} $N_{\mathbf{C}} ({\rm conv}(S))$ of the center-polytope $\mathbf{C}$ at the face ${\rm conv}(S)$ is defined by 
$$N_{\mathbf{C}} ({\rm conv}(S)):=\{\mathbf{y}\in\mathbb{E}^{d}\ |\ \langle \mathbf{x}-\mathbf{s}, \mathbf{y}\rangle\leq 0, {\rm\ for\ all}\ \mathbf{x}\in \mathbf{C}\}. $$
Now, we are ready to recall that according to Section 3 of \cite{GoMaTo}
\begin{equation}\label{GMT}
{\rm rec}(V_S)=-N_{\mathbf{C}} ({\rm conv}(S)) {\rm \ with\ }{\rm dim}(V_S)={\rm dim}(N_{\mathbf{C}} ({\rm conv}(S))).
\end{equation}
As $N_{\mathbf{C}} ({\rm conv}(S))$ is clearly a pointed $(l+1)$-dimensional convex polyhedral cone in $\mathbb{E}^{d}$, (\ref{GMT}) completes the proof of Lemma~\ref{rec1}.
\end{proof}
%For the next statement we need to recall the following concept. If $S^l(\mathbf{p}_0, s_0)$ is an $l$-dimensional sphere in $\mathbb{E}^{d}$ with $0\leq l\leq d-1$, then a set $F_0\subsetneq S^l(\mathbf{p}_0, s_0)$ is called {\it spherically convex} if it is contained in an open hemisphere of $S^l(\mathbf{p}_0, s_0)$ and for every $\mathbf{x}, \mathbf{y}\in F_0$ the shorter great-circular arc of $S^l(\mathbf{p}_0, s_0)$ connecting $\mathbf{x}$ with $\mathbf{y}$ is in $F_0$.

\begin{lemma}\label{rec2}
Let $\mathbf{p}^*\in\mathbb{E}^{l+1}, l\geq 0$ and let $\mathbf{Q}$ be an unbounded closed $(l+1)$-dimensional convex polyhedral set in $\mathbb{E}^{l+1}$ with an $(l+1)$-dimensional pointed recession cone. Then there exists $s_0>0$ with the following property: for all $s^*\geq s_0$ the vertices of $\mathbf{Q}$ are contained in $\mathbf{B}^{l+1}(\mathbf{p}^*, s^*)$ with ${\rm bd}\left(\mathbf{B}^{l+1}(\mathbf{p}^*, s^*)\right)=:S^l(\mathbf{p}^*, s^*)$ such that $S^l(\mathbf{p}^*, s^*)\cap\mathbf{Q}$ is homeomorphic to a closed $l$-dimensional Euclidean ball. 
\end{lemma} 
\begin{proof}[Proof of Lemma~\ref{rec2}:]
Let $\{F_i(\mathbf{Q})\ |\ i\in I_{\mathbf{Q}}\}$ be the family of facets of $\mathbf{Q}$ and let $H_i(\mathbf{Q})$ be the hyperplane of $F_i(\mathbf{Q})$ in $\mathbb{E}^{l+1}$ bounding the closed half-space $H_i^+(\mathbf{Q})$ which contains $\mathbf{Q}$ for $i\in I_{\mathbf{Q}}$. As ${\rm dim}\left({\rm rec}(\mathbf{Q})\right)=l+1$, the Theorem of \cite{Be92} implies in a straightforward way that $\mathbf{Q}$ can be illuminated by a single point-source in $\mathbb{E}^{l+1}$, i.e., there exists a point $\mathbf{q}\in \mathbb{E}^{l+1}$ such that  $\mathbf{q}\notin H_i^+(\mathbf{Q})$ for all $i\in I_{\mathbf{Q}}$. Now, let $s_0>0$ such that the vertices of $\mathbf{Q}$ are contained in $\mathbf{B}^{l+1}(\mathbf{p}^*, s_0)$ and also $\mathbf{q}\in \mathbf{B}^{l+1}(\mathbf{p}^*, s_0)$. Furthermore, let $\mathbf{q}_0\in{\rm int}(\mathbf{Q})\cap \mathbf{B}^{l+1}(\mathbf{p}^*, s_0)$. Finally, let $s^*\geq s_0$. Clearly, $\mathbf{B}^{l+1}(\mathbf{p}^*, s^*)$ contains the vertices of $\mathbf{Q}$ as well as $\mathbf{q}_0$ and $\mathbf{q}$, implying that the half-line starting at $\mathbf{q}_0$ and passing through $\mathbf{q}$ intersects $S^l(\mathbf{p}^*, s^*)$ say, in $\mathbf{q}^*$. It follows that
\begin{equation}\label{illumination by point}
\mathbf{q}^*\notin H_i^+(\mathbf{Q})\ {\rm for\ all\ }i\in I_{\mathbf{Q}},
\end{equation}
i.e., $\mathbf{q}^*\in S^l(\mathbf{p}^*, s^*)$ as a point-source illuminates $\mathbf{Q}$ in $\mathbb{E}^{l+1}$. Let the closed $l$-dimensional spherical cap $C_i(\mathbf{p}^*, s^*)$ be defined by $C_i(\mathbf{p}^*, s^*):=H_i^+(\mathbf{Q})\cap S^l(\mathbf{p}^*, s^*)$ for $i\in I_{\mathbf{Q}}$. It follows from (\ref{illumination by point}) that
\begin{equation}\label{stereographic-projection}
\mathbf{q}^*\notin C_i(\mathbf{p}^*, s^*)\ {\rm for\ all\ }i\in I_{\mathbf{Q}}{\rm \ and\ }S^l(\mathbf{p}^*, s^*)\cap\mathbf{Q}=\bigcap_{i\in I_{\mathbf{Q}}}C_i(\mathbf{p}^*, s^*).
\end{equation} 
Now, let $\pi_{\mathbf{q}^*}: S^l(\mathbf{p}^*, s^*)\to H^*$ be the stereographic projection with center (or pole) at $\mathbf{q}^*$ mapping $S^l(\mathbf{p}^*, s^*)\setminus\{\mathbf{q}^*\}$ onto the hyperplane $H^*$ of $\mathbb{E}^{l+1}$, where $H^*$ is tangent to $S^l(\mathbf{p}^*, s^*)$ at a point diametrically opposite to $\mathbf{q}^*$. Thus, (\ref{stereographic-projection}) implies in a straightforward way that $\pi_{\mathbf{q}^*}\left(C_i(\mathbf{p}^*, s^*)\right)$ is a closed $l$-dimensional Euclidean ball in $H^*$ for all $i\in I_{\mathbf{Q}}$ and therefore
\begin{equation}
\pi_{\mathbf{q}^*}\left(S^l(\mathbf{p}^*, s^*)\cap\mathbf{Q}\right)=\bigcap_{i\in I_{\mathbf{Q}}}\pi_{\mathbf{q}^*}\left(C_i(\mathbf{p}^*, s^*)\right)
\end{equation}
is an $l$-dimensional compact convex set with non-empty interior in $H^*$, finishing the proof of Lemma~\ref{rec2}.
\end{proof}
\noindent Finally, applying Lemma~\ref{rec2} to any $l$-dimensional face $F$ of $\mathbf{P}_{C, r}$ with a representation given by (\ref{supporting-0}), (\ref{supporting-1}) and (\ref{supporting-2}) for $0\leq l\leq d-1$, yields that $F$ is  homeomorphic to a closed $l$-dimensional Euclidean ball. Since there are only finitely many $F$'s to consider the existence of $r_0$ in Part (iii) of Theorem~\ref{basic properties-1} is also clear. This completes the proof of Theorem~\ref{basic properties-1}.
\end{proof}

The proof of Part (iii) of Theorem~\ref{basic properties-1} and  the Euler-Poincar\'e formula for standard $r$-ball polyhedra (see Corollary 6.10 in \cite{blnp}) imply the following statement in a straighforward way.
\begin{corollary}
For given center-polytope $\mathbf{C}$ there exists $r_0>0$ such that for all $r\geq r_0$ the basic $r$-ball polyhedron $\mathbf{P}_{C, r}$ is also a standard $r$-ball polyhedron in $\mathbb{E}^{d}$ possessing the following Euler-Poincar\'e formula:
$$1+(-1)^{d+1}=\sum_{i=0}^{d-1}(-1)^if_i(\mathbf{P}_{C, r}).$$
 \end{corollary}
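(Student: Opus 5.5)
The plan is to combine Part (iii) of Theorem~\ref{basic properties-1} with the Euler--Poincar\'e formula for standard $r$-ball polyhedra, Corollary 6.10 of \cite{blnp}. For that formula to apply, two things must hold. First, $\mathbf{P}_{C, r}$ must be a standard $r$-ball polyhedron. Second, its faces in the sense of \cite{blnp} must coincide with the faces in the sense of Definition~\ref{basic r-ball polyhedron}, so that the numbers $f_i$ agree. I take $r_0$ to be the constant supplied by Part (iii), so the first requirement holds for all $r\ge r_0$.

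The second requirement is handled face by face. In \cite{blnp} the faces of a standard ball polyhedron are the intersections $S^l(\mathbf{p},s)\cap\mathbf{P}_{C, r}$ with supporting spheres, and $f_i$ counts the $i$-dimensional ones. Equations (\ref{supporting-0})--(\ref{supporting-2}) show that each such intersection is a closed face ${\rm bd}(\mathbf{P}_{C, r})\cap{\rm cl}(V_S)$ with $S=\{\mathbf{c}_{i_1},\dots,\mathbf{c}_{i_k}\}$. Its dimension is $l=d-\dim{\rm aff}(S)-1$, which by Part (ii) matches the dimension $(k-1)$ assigned in Definition~\ref{basic r-ball polyhedron}.

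For the converse direction, I start from a closed face ${\rm cl}(V_S)\cap{\rm bd}(\mathbf{P}_{C, r})$ with ${\rm conv}(S)\in\partial D_f(C)$. I intersect the generating spheres centered at the points of $S$. Since ${\rm conv}(S)$ is inscribed in a sphere, the points of ${\rm cl}(V_S)$ are equidistant from $S$. The points of the face are at distance exactly $r$ from every point of $S$, so the face lies in this supporting sphere and, by (\ref{supporting-1}), equals its intersection with $\mathbf{P}_{C, r}$. This gives a bijection between the two face notions that preserves dimension.

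The main obstacle is checking that distinct subsets $S$ never produce the same set in \cite{blnp}'s count, so that nothing is double counted. I would argue this through the inclusion-reversing bijection $\phi$ from the proof of Part (ii): distinct faces of $V_f(C)$ meet ${\rm bd}(\mathbf{P}_{C, r})$ in distinct sets, since each such face is unbounded and its relative interior reaches the boundary. With the face counts identified, Corollary 6.10 of \cite{blnp} gives $\sum_{i=0}^{d-1}(-1)^if_i=1+(-1)^{d+1}$, which is the Euler characteristic of the $(d-1)$-sphere ${\rm bd}(\mathbf{P}_{C, r})$.
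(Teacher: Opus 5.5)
Your route is the paper's: $r_0$ comes from Part (iii) of Theorem~\ref{basic properties-1}, and the formula is Corollary 6.10 of \cite{blnp}. The paper only calls the matching of face counts ``straightforward'', so spelling it out is worthwhile. However, the forward half of your matching is false as stated; it inherits the slip in (\ref{supporting-2}). The index set $S=\{\mathbf{c}_{i_1},\dots,\mathbf{c}_{i_k}\}$ of a supporting sphere need not be the vertex set of a cell of $\partial D_f(C)$. In that case $V_S=\emptyset$, the intersection is not ${\rm cl}(V_S)\cap{\rm bd}(\mathbf{P}_{C,r})$, and its dimension is strictly less than $l=d-\dim{\rm aff}(S)-1$. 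Concretely, take $\mathbf{C}=[-1,1]^3$ and $r>\sqrt{3}$. All vertices are cospherical, so $\partial D_f(C)$ is the boundary complex of the cube. For the diagonal vertices $\mathbf{c}_1=(1,1,1)$ and $\mathbf{c}_3=(-1,-1,1)$ of the top square, $S^2(\mathbf{c}_1,r)\cap S^2(\mathbf{c}_3,r)$ is a circle, so $l=1$. Yet this circle meets $\mathbf{P}_{C,r}$ only in the point $(0,0,1-\sqrt{r^2-2})$, the vertex dual to the top square. Indeed, a point $\x=(x_1,x_2,x_3)\in\mathbf{P}_{C,r}$ on the circle has $\mathbf{c}_1,\mathbf{c}_3\in R_f(\x)$, which forces $x_1=x_2=0$ and $x_3\le 0$. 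So a dimension-preserving bijection in your sense does not exist.

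The repair is easy. Let ${\rm conv}(S')$ be the intersection of all cells of $D_f(C)$ containing $S$. This is a cell because $D_f(C)$ is face-to-face and some cell contains $S$, since the sphere meets $\mathbf{P}_{C,r}$. By Part (ii) it lies in $\partial D_f(C)$, and $\bigcap_{\mathbf{c}\in S}{\rm cl}(R_f^{-1}(\mathbf{c}))=\{\x\ |\ S\subseteq R_f(\x)\}={\rm cl}(V_{S'})$. Hence $S^l\cap\mathbf{P}_{C,r}={\rm cl}(V_{S'})\cap{\rm bd}(\mathbf{P}_{C,r})$. This is a closed face in the sense of Definition~\ref{basic r-ball polyhedron} of dimension $d-\dim{\rm aff}(S')-1$, and by Part (iii) that is also the dimension of the ball it is homeomorphic to, i.e.\ its dimension in \cite{blnp}. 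Since \cite{blnp} counts faces as sets, different supporting spheres giving the same face is harmless; in the example, $\{\mathbf{c}_1,\mathbf{c}_3\}$ and the four vertices of the square give the same face. So your ``main obstacle'' is aimed at the wrong place: distinct index sets do produce the same set, and that is fine. What matters is reading the dimension off $S'$ rather than $S$. With that change, your converse direction and your injectivity argument over faces of $V_f(C)$ finish the identification of the $f_i$.
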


\section{Proofs of Theorems~\ref{combinatorics of basic r-ball polyhedra} and~\ref{global rigidity of normal ball polyhedra}}\label{combinatorial-metric}

\begin{proof}[Proof of Theorem ~\ref{combinatorics of basic r-ball polyhedra} :]
Let $\mathbf{C}:={\rm conv}\left(\{\mathbf{c}_1,\mathbf{c}_2,\dots ,\mathbf{c}_n\}\right)$ be an arbitrary $d$-dimensional convex polytope with vertex set $C:=\{\mathbf{c}_1,\mathbf{c}_2,\dots ,\mathbf{c}_n\}$ in $\mathbb{E}^{d}$, where $n\geq d+1\geq 3$. Next, choose $r>0$ such that $\mathbf{P}_{C, r}=\cap_{i=1}^{n}\mathbf{B}^d[\mathbf{c}_i, r]$ is a basic $r$-ball polyhedron with $n$ facets having $\mathbf{C}={\rm conv} (C)$ as its center-polytope. According to Part (ii) of Theorem~\ref{basic properties-1},  $V_S\cap{\rm bd}(\mathbf{P}_{C, r})$ is a $(k-1)$-dimensional face of $\mathbf{P}_{C, r}$ if and only if ${\rm conv}(S)\in \partial D_f(C)$ with ${\rm dim}\left({\rm conv}(S)\right)=d-k$, where $1\leq k\leq d$. Thus, 
the number $f_{k-1}\left(\mathbf{P}_{C, r}\right)$ of $(k-1)$-dimensional faces of $\mathbf{P}_{C, r}$ is equal to the number of $(d-k)$-dimensional convex cells ${\rm conv}(S)\in \partial D_f(C)$ of the farthest point boundary Delaunay complex $\partial D_f(C)$. We note that every $(d-k)$-dimensional convex cell ${\rm conv}(S)\in \partial D_f(C)$ is a subset of a $(d-k)$-dimensional face of the center-polytope $\mathbf{C}$ and $\partial D_f(C)$ is a polyhedral complex with $n$ vertices partitioning ${\rm bd}(\mathbf{C})$. Clearly, $\partial D_f(C)$ generates a spherical polyhedral complex of a $(d-1)$-dimensional sphere via radial projection from an interior point of $\mathbf{C}$ onto a $(d-1)$-dimensional sphere centered at the chosen interior point.  From Section 2.5 in \cite{McSh} on pulling the vertices of a polytope, we recall that in discussing the problem of maximizing the number of faces of a polytope, it is sufficient to restrict attention to simplicial polytopes and, in a similar way, for spherical complexes, we need only consider simplicial ones (see also Section 4.1 in \cite{McSh}). As a result, we finish by recalling Stanley's Upper Bound Theorem (\cite{Sta}) according to which the number of $(d-k)$-dimensional faces in an arbitrary triangulation of a $(d-1)$-dimensional sphere with $n$ vertices is at most $c_{d-k}(n,d)$, where $0\leq k-1\leq d-1$. Thus, $f_{k-1}\left(\mathbf{P}_{C, r}\right) \leq c_{d-k}(n,d)$ holds for all $0\leq k-1\leq d-1$. This completes the proof of Theorem~\ref{combinatorics of basic r-ball polyhedra}.
\end{proof}
\begin{proof}[Proof of Theorem~\ref{global rigidity of normal ball polyhedra}:]
Let  $\mathbf{P}_{C, r}$ (resp., $\mathbf{P}_{C', r}$) be a basic $r$-ball polyhedron with center-polytope $\mathbf{C}$ (resp., $\mathbf{C}'$) having vertex set $C:=\{\mathbf{c}_1,\mathbf{c}_2,\dots ,\mathbf{c}_n\}$ (resp., $C':=\{\mathbf{c}'_1,\mathbf{c}'_2,\dots ,\mathbf{c}'_n\}$) in $\mathbb{E}^{d}$, $d\geq 3$ such that the face lattices $L\left(\mathbf{P}_{C, r}\right)$ and $L\left(\mathbf{P}_{C', r}\right)$ are isomorphic and the inner dihedral angles of $\mathbf{P}_{C, r}$ are equal to the corresponding inner dihedral angles of $\mathbf{P}_{C', r}$. As the face lattices $L\left(\mathbf{P}_{C, r}\right)$ and $L\left(\mathbf{P}_{C', r}\right)$ are isomorphic, Parts (i) and (ii) of Theorem~\ref{basic properties-1} imply in a straightforward way that the farthest point boundary Delaunay complexes $\partial D_f(C)$ and $\partial D_f(C')$ are isomorphic with the relative interiors of the members partitioning ${\rm bd}(\mathbf{C})={\rm bd}\left({\rm conv}(C)\right)$ and ${\rm bd}(\mathbf{C}')={\rm bd}\left({\rm conv}(C')\right)$) in a face-to-face way. The assumption that the corresponding dihedral angles of $\mathbf{P}_{C, r}$ and $\mathbf{P}_{C', r}$ are equal means via Part (ii) of Theorem~\ref{basic properties-1} that the corresponding edges of the polyhedral complexes $\partial D_f(C)$ and $\partial D_f(C')$ are equal. As every member of the polyhedral complexes $\partial D_f(C)$ and $\partial D_f(C)$ is a convex polytope inscribed in a sphere, therefore the following statement proves that the corresponding members of $\partial D_f(C)$ and $\partial D_f(C')$ are congruent.
\begin{lemma}\label{special Cauchy}
Let $\mathbf{Q}$ and $\mathbf{Q}'$ be $d$-dimensional convex polytopes in $\mathbb{E}^{d}$, $d\geq 2$ such that their face lattices are isomorphic with the corresponding edges having equal length. If every $k$-face ($2\leq k\leq d$) of $\mathbf{Q}$ and $\mathbf{Q}'$ is inscribed in a $(k-1)$-sphere, then $\mathbf{Q}$ and $\mathbf{Q}'$ are congruent.
\end{lemma}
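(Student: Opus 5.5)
We argue by induction on the dimension $k$ of the faces, proving that for every $k$ with $2\leq k\leq d$, corresponding $k$-faces of $\mathbf{Q}$ and $\mathbf{Q}'$ are congruent. The case $k=d$ is the lemma.

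\textbf{Base case $k=2$.} A $2$-face $F$ of $\mathbf{Q}$ is a convex polygon inscribed in a circle. Let $F'$ be the corresponding $2$-face of $\mathbf{Q}'$. Its edges have the same lengths, in the same cyclic order, because the lattice isomorphism preserves incidences. For a convex polygon inscribed in a circle of radius $\rho$, each side $a_j$ subtends the central angle $2\arcsin(a_j/2\rho)$. The circumcenter lies inside the polygon, on its boundary, or outside it, and accordingly the central angles satisfy one of the following:
\begin{itemize}
\item the angles add up to $2\pi$;
\item the angle of the longest side equals the sum of the others.
\end{itemize}
In each case the defining equation has a unique solution $\rho$, since the relevant function of $\rho$ is strictly monotone on each regime and the regimes are separated by the value of $\rho$ at which the longest side becomes a diameter. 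So $F$ and $F'$ have the same circumradius. Placing the vertices consecutively on that circle with the prescribed chords then determines the polygon up to isometry.

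\textbf{Inductive step.} Assume all corresponding $(k-1)$-faces are congruent, and let $F$ be a $k$-face inscribed in a $(k-1)$-sphere of radius $\rho$ and center $\mathbf{o}$. Every facet $G$ of $F$ is inscribed in the $(k-2)$-sphere cut out by $\mathrm{aff}(G)$. Its circumradius $\rho_G$ is determined by the congruence class of $G$, so $\rho_G=\rho_{G'}$. The distance from $\mathbf{o}$ to $\mathrm{aff}(G)$ is then $h_G=\sqrt{\rho^2-\rho_G^2}$, with a sign recording on which side of $\mathrm{aff}(G)$ the center lies. Gluing the congruent facets along their common $(k-2)$-faces determines the dihedral angle between adjacent facets as a function of $\rho$ and these signs: it is the angle at $\mathbf{o}$ computed in the $2$-plane orthogonal to the common ridge. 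So once we show $\rho=\rho'$, with matching sign patterns, all dihedral angles agree, and a polytope whose facets are congruent and glued with equal dihedral angles is congruent, by building it facet by facet along the connected dual graph. Concretely, the vertices are all at distance $\rho$ from $\mathbf{o}$, and each facet's placement is forced once one facet is fixed.

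\textbf{Main obstacle.} The hard step is proving $\rho=\rho'$, and I would handle it with a monotonicity argument generalizing the polygon case. Fix a vertex $\mathbf{v}$ of $F$ and consider the vertex figure at $\mathbf{v}$. The facets through $\mathbf{v}$ are congruent for $F$ and $F'$, so their face angles at $\mathbf{v}$ agree, while their dihedral angles are functions of $\rho$. Closing up the vertex figure forces a unique $\rho$, since increasing $\rho$ flattens every dihedral angle simultaneously. Equivalently, one can argue with the solid angle at $\mathbf{o}$ subtended by the facets: these angles are strictly decreasing in $\rho$, and their signed sum must equal the full sphere measure, so $\rho$ is unique just as in the planar case. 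A cleaner alternative, if that fails, is to note that the circumcenter of $F$ is determined by any vertex $\mathbf{v}$ and its neighbors. The neighbors lie on the sphere, and their distances to $\mathbf{v}$, and to each other within the congruent $2$-faces, are known. So $\rho$ is the circumradius of a simplex-like configuration whose metric data is already fixed by the induction hypothesis, and this again gives $\rho=\rho'$.
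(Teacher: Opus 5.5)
Your base case is fine. It is the elementary fact the paper leaves to the reader. In the center-outside regime, though, $2\arcsin(a_{\max}/2\rho)-\sum_j 2\arcsin(a_j/2\rho)$ is not monotone in $\rho$; its root is unique because $\theta\mapsto\sum_j\arcsin(\lambda_j\sin\theta)$ is concave, with $\lambda_j=a_j/a_{\max}$. The genuine gap is the inductive step. You never establish $\rho=\rho'$, you never establish that the side/sign data of $F$ and $F'$ agree, and each route you suggest fails as stated.

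\emph{First}, dihedral angles do not all flatten as $\rho$ grows. Work in the $2$-plane through the circumcenter of a ridge $R$ orthogonal to ${\rm aff}(R)$. The angle between a facet $G_i\supset R$ and the direction to $\mathbf{o}$ is $\alpha_i=\arccos\bigl(s_i/\sqrt{\rho^2-\rho_R^2}\bigr)$, where $s_i$ is the signed distance from the circumcenter of $R$ to that of $G_i$. The dihedral angle is $\alpha_1+\alpha_2$ in some configurations and $|\alpha_1-\alpha_2|$ in others; this already happens at an acute vertex of an obtuse triangle. If $G_i$ has its circumcenter beyond $R$, then $s_i<0$ and $\alpha_i$ decreases in $\rho$. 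Also $|\alpha_1-\alpha_2|\to 0$ as $\rho\to\infty$. Even granting monotonicity, ``closing up the vertex figure forces a unique $\rho$'' is exactly Cauchy's arm lemma with its sign-change count. For $k\geq 4$ the vertex figure is not even a polygon.

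\emph{Second}, the solid angle $\omega_G(\rho)$ of a facet whose circumcenter lies outside $G$ vanishes at $\rho=\rho_G$ and is positive afterwards, so it is not decreasing. Moreover, when $\mathbf{o}\notin F$ the relation is $\sum_G\epsilon_G\,\omega_G(\rho)=0$ with a sign vector $\epsilon$ not known to agree for $F$ and $F'$. So there is no monotone function to invert, and nothing excludes $F$ and $F'$ lying in different regimes.

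\emph{Third}, the neighbours of $\mathbf{v}$ fix the circumradius only at a simple vertex. At a vertex of degree larger than $k$, two neighbours need not share a $2$-face; every vertex of the inscribed regular octahedron is an example. There the vertex star has a one-parameter flex, and cosphericity is a single equation with no uniqueness argument.

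\emph{Finally}, even with $\rho=\rho'$, the facet across an already placed ridge has two admissible positions on the sphere. They are mirror images in the hyperplane through ${\rm aff}(R)$ and $\mathbf{o}$. So ``matching sign patterns'' is itself a global rigidity statement.

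What is missing is the global rigidity theorem the paper invokes. Once the induction hypothesis gives congruent corresponding facets, apply Alexandrov's theorem (Cauchy's for $d=3$): two convex $d$-polytopes, $d\geq 3$, with isomorphic face lattices and congruent corresponding facets are congruent. With it, your induction on face dimension closes immediately, and inscription is needed only for the $2$-faces in the base case. Without it, local monotonicity in $\rho$ cannot replace the Cauchy-type sign-change argument.
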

\begin{proof}[Proof of Lemma~\ref{special Cauchy}:]
One can prove the statement by induction on $d\geq 2$ as follows. For $d=2$ one needs to show that a convex polygon which can be inscribed in a circle in $\mathbb{E}^{2}$, is uniquely determined, up to congruence, by the lengths and cyclic order of its sides. We leave the proof of this elementary fact to the reader. Then we assume that for any two isomorphic "inscribed-type" $(d-1)$-dimensional polytopes with equal corresponding edge lengths, the polytopes are congruent. Next, consider any $(d-1)$-dimensional face (facet) $F$ of $\mathbf{Q}$ and its corresponding facet $F'$ of  $\mathbf{Q}'$. Applying the inductive hypothesis, we conclude that $F$ and $F'$ are congruent. We now have two $d$-dimensional convex polytopes $\mathbf{Q}$ and $\mathbf{Q}'$ in $\mathbb{E}^{d}$ such that their face lattices are isomorphic and all their corresponding facets are congruent. Finally, we recall the theorem of Alexandrov (see Chapter 3 in \cite{Al}) stating that if two $d$-dimensional convex polytopes in $\mathbb{E}^{d}$ have isomorphic face lattices and congruent corresponding facets, then the polytopes themselves are congruent in $\mathbb{E}^{d}$. Since the conditions of this theorem of Alexandrov hold for $\mathbf{Q}$ and $\mathbf{Q}'$, we conclude that $\mathbf{Q}$ must be congruent to $\mathbf{Q}'$, finishing the inductive proof of Lemma~\ref{special Cauchy}. \end{proof}
As an immediate corollary of Lemma~\ref{special Cauchy} we get that the farthest point boundary Delaunay complexes $\partial D_f(C)$ and $\partial D_f(C')$ are isomorphic with the relative interiors of the members partitioning ${\rm bd}(\mathbf{C})={\rm bd}\left({\rm conv}(C)\right)$ and ${\rm bd}(\mathbf{C}')={\rm bd}\left({\rm conv}(C')\right)$) in a face-to-face way such that the corresponding members of $\partial D_f(C)$ and $\partial D_f(C')$ are congruent. Now, we recall Alexandrov's Uniqueness Theorem for Polytopes from Chapter 3 of \cite{Al} as follows: Let $\mathbf{Q}$ and $\mathbf{Q}'$ be $d$-dimensional convex polytopes in $\mathbb{E}^{d}$, $d\geq 3$. If there exists a homeomorphism $f:{\rm bd}(\mathbf{Q}) \to {\rm bd}(\mathbf{Q}')$ between their boundaries such that $f$ is an intrinsic isometry, then $\mathbf{Q}$ and $\mathbf{Q}'$ are congruent. As it is not hard to see that the conditions of this theorem of Alexandrov hold for ${\rm bd}(\mathbf{C})$ and ${\rm bd}(\mathbf{C}')$ via the polyhedral complexes $\partial D_f(C)$ and $\partial D_f(C')$, we conclude that $\mathbf{C}$ and $\mathbf{C}'$ are congruent and therefore $\mathbf{P}_{C, r}$ and $\mathbf{P}_{C', r}$ are congruent as well. This completes the proof of Theorem~\ref{global rigidity of normal ball polyhedra}.
\end{proof}

\section{Concluding Remarks}\label{open-problems}

\begin{problem}\label{upper bounding the number of faces}
Prove or disprove that Theorems~\ref{combinatorics of basic r-ball polyhedra} and~\ref{global rigidity of normal ball polyhedra} extend to standard $r$-ball polyhedra in $\mathbb{E}^{d}$, $d\geq 3$.
\end{problem}

\begin{remark}\label{problem for d=3}
As the Euler-Poincar\'e formula holds for standard $r$-ball polyhedra in $\mathbb{E}^{d}$, $d\geq 3$, therefore an easy computation implies that Theorems~\ref{combinatorics of basic r-ball polyhedra} extends to standard $r$-ball polyhedra of $\mathbb{E}^{3}$, i.e., an arbitrary standard $r$-ball polyhedron with $n$ facets in $\mathbb{E}^{3}$ has at most $c_2(n,3)=2n-4$ vertices and $c_1(n,3)=3n-6$ edges. Furthermore, recall that a $3$-dimensional standard $r$-ball polyhedron is called simple if at every vertex exactly $3$ edges meet. The main result of \cite{BeNa} proves a local version of Theorem~\ref{global rigidity of normal ball polyhedra} for $3$-dimensional standard $r$-ball polyhedra stating that every simple and standard $r$-ball ball-polyhedron of $\mathbb{E}^{3}$ is locally rigid with respect to its inner dihedral angles.
\end{remark}

%Recall that a vertex of an $r$-ball-polyhedron in $\mathbb{E}^{d}$ is a boundary point that belongs to at least $d$ generating balls of the $r$-ball polyhedron. The following question is partly a special case of Problem~\ref{upper bounding the number of faces} and partly a broader one.

%\begin{problem}\label{upper bounding the number of vertices}
%Prove or disprove that the number of vertices of an arbitrary $d$-dimensional $r$-ball polyhedron generated by $n>1$ closed balls of radius $r>0$ is at most $c_{d-1}(n,d)$ for $d\geq 3$.
%\end{problem}

\bigskip

%\normalsize

\noindent K\'aroly Bezdek \\
\small{Department of Mathematics and Statistics, University of Calgary, Canada}\\
\small{Department of Mathematics, University of Pannonia, Veszpr\'em, Hungary\\
\small{E-mail: \texttt{bezdek@math.ucalgary.ca}}


\bigskip

\small
\begin{thebibliography}{GGM}

\bibitem{Al}
A. D. Alexandrov, Convex polyhedra (English translation by N. S. Dairbekov, S. S. Kutateladze and A. B. Sossinsky), \emph{Springer-Berlin}, 2005.

%\bibitem{ALN}
%S. M. Almohammad, Z. L\'angi, and M. Nasz\'odi, An analogue of a theorem of Steinitz for ball polyhedra in $\R^3$, \emph{Aequationes Math.} \textbf{96/2} (2022), 403--415.

%\bibitem{At}
%Ch. A. Athanasiadis, On the graph connectivity of skeleta of convex polytopes,  \emph{Discrete Comput. Geom.} \textbf{42/2} (2009), 155--165.


\bibitem{AK00}F.~Aurenhammer and R.~Klein, Voronoi diagrams, \emph{Handbook of computational geometry, North-Holland, Amsterdam}, 2000, 201--290.

\bibitem{Be92}
K. Bezdek, On the illumination of unbounded closed convex sets, \emph{Israel J. Math.} \textbf{80/1-2} (1992), 87--96. 

\bibitem{blnp} K. Bezdek, Zs. L\'angi, M. Nasz\'odi, and P. Papez,  Ball-polyhedra,  \emph{Discrete Comput. Geom.} \textbf{38/2} (2007), 201--230.

\bibitem{BeNa}K. Bezdek and M. Nasz\'odi, Rigid ball-polyhedra in Euclidean 3-space, \emph{Discrete Comput. Geom.} \textbf{49/2} (2013), 189--199. 

\bibitem{Be2013a}
K. Bezdek, Lectures on sphere arrangements - the discrete geometric side, \emph{Fields Institute Monographs} \textbf{32} Springer, New York; Fields Institute for Research in Mathematical Sciences, Toronto, ON, 2013.

\bibitem{Be2013b}
K. Bezdek, Globally rigid ball-polyhedra in Euclidean 3-space, \emph{Transactions on Comput. Sci. in Lecture Notes in Comput. Sci.} \textbf{8110} (2013), Springer, Heidelberg, 158--169. 

\bibitem{GoMaTo}
M. A. Goberna, J. E. Mart\'inez-Legaz, M. I. Todorov, On farthest Voronoi cells, \emph{Linear Algebra Appl.} \textbf{583} (2019), 306--322.
 


%\bibitem{EKS83}
%H.~Edelsbrunner, D. G. Kirkpatrick, and R. Seidel, On the shape of a set of points in the plane, \emph{IEEE Trans. Inform. Theory} \textbf{29/4} (1983), 551--559.


%\bibitem{Gr}
%B. Gr\"unbaum, Convex polytopes, \emph{Graduate Texts in Mathematics} \textbf{221}, Springer-Verlag, New York, 2003. 

\bibitem{KMP}
Y. S. Kupitz, H. Martini, and M. A. Perles, Ball polytopes and the V\'azsonyi problem, \emph{Acta Math. Hungar.}  \textbf{126/1-2} (2010), 99--163.

\bibitem{Mc}
P. McMullen, The maximum numbers of faces of a convex polytope, \emph{Mathematika} \textbf{17} (1970), 179--184.

\bibitem{McSh}
P. McMullen and G. C. Shephard, Convex polytopes and the upper bound conjecture, London Math. Soc. Lecture Note Ser., \emph{Cambridge University Press, London-New York}, 1971.

\bibitem{MMO}
H. Martini, L. Montejano, and D. Oliveros, Bodies of constant width - An introduction to convex geometry with applications, \emph{Birkh\"auser-Springer}, 2019.

\bibitem{Pa}
I. Pak, Lectures on discrete and polyhedral geometry,  https://www.math.ucla.edu/pak/book.htm, 1--442.


\bibitem{Se91}
R. Seidel, Exact upper bounds for the number of faces in d-dimensional Voronoi diagrams, \emph{DIMACS Ser. Discrete Math. Th. Comput. Sci., Amer. Math. Soc., Appl. Geom. Discrete Math.} \textbf{4} (1991), 517--529.

\bibitem{Sta}
R. P. Stanley, The upper bound conjecture and Cohen-Macaulay rings, \emph{Studies in Appl. Math.} \textbf{54/2} (1975), 135--142.


%\bibitem{St}
%E. Steinitz,  Polyeder und Raumeinteilungen, \emph{Encyclop\"adie der mathematischen Wissenschaften} \textbf{IIIAB12} (1916), 1--139.



%\bibitem{Zi}
%G. M. Ziegler, Lectures on polytopes, \emph{Graduate Texts in Mathematics} \textbf{152}, Springer-Verlag, New York, 1995.

\end{thebibliography}
\end{document}